\newenvironment{alginc}[1][pseudocode]{\medskip\algsetlanguage{#1}\begin{algorithmic}[0]}{\end{algorithmic}\medskip}
 \newcommand{\F}{{\mathbb{F}}}
\newtheorem{theorem}{Theorem}[section]
\newtheorem{definition}[theorem]{Definition}
\newtheorem{remark}[theorem]{Remark}
\newtheorem{Condition}[theorem]{Condition}
\newtheorem{examplecore}[theorem]{Example}}
\newcommand{\op}{\operatorname}
\newcommand*{\Homol}{\operatorname{H}}
\newcommand*{\Farrell}{\widehat{\operatorname{H}}}
\newcommand*{\Steinberg}{\widetilde{\operatorname{H}}}
\newcommand{\Z}{\mathbb{Z}}
\newcommand{\Afour}{\mathcal{A}_4}
\newcommand{\Sfour}{{\mathcal{S}}_4}
\newcommand{\Dtwo}{{\mathcal{D}}_2}
\newcommand{\Dfour}{{\mathcal{D}}_4}
\newcommand{\CtwoSq}{(\Z/2\Z)^2}
\newcommand{\Ctwo}{\Z/2\Z}
\newcommand{\Deight}{{\mathcal{D}}_8}
\newcommand{\CtwoCube}{(\Z/2\Z)^3}
\newcommand{\Cfour}{\Z/4\Z}
\begin{document}

\title[Farrell--Tate cohomology of \texorpdfstring{${\rm PSL}_4(\mathbb{Z})$}{PSL4Z} and Discrete Morse theory]{The 2-torsion in the Farrell--Tate cohomology of \texorpdfstring{${\rm PSL}_4(\mathbb{Z})$}{PSL4Z}, \mbox{and torsion subcomplex reduction via discrete Morse theory}}

\author{Alexander D. Rahm$^1$, Bui Anh Tuan$^{2,3}$,  and Matthias Wendt$^4$}

\date{\today}

\address{$^1$Laboratoire de math\'ematiques GAATI, Universit\'e de la Polyn\'esie Fran\c{c}aise, BP 6570, 98702 Faa'a, French Polynesia\\ Alexander.Rahm@upf.pf , ORCID: 0000-0002-5534-2716, https://gaati.org/rahm}
\address{$^2$Faculty of Mathematics and Computer Science, University of Science, Ho Chi Minh City, Vietnam}
\address{$^3$Vietnam National University, Ho Chi Minh City, Vietnam}
\address{$^4$Matthias Wendt, Fachgruppe Mathematik und Informatik, Bergische Universit\"at Wuppertal, Gauss\-strasse 20, 42119 Wuppertal, Germany}

\subjclass[2020]{11F75, 20J06}
\keywords{Cohomology of arithmetic groups, Farrell--Tate cohomology, projective special linear group of rank 4, discrete Morse theory, torsion subcomplex reduction}

\begin{abstract}
In the present paper, we use an equivariant modification of discrete Morse theory to provide a new implementation of torsion subcomplex reduction for arithmetic groups. This leads both to a simpler algorithm as well as runtime improvements. To demonstrate the technique, we compute the mod 2 Farrell--Tate cohomology of $\op{PSL}_4(\mathbb{Z})$.
\end{abstract}

\maketitle
\setcounter{tocdepth}{1}

\section{Introduction}

In this paper, we compute the mod 2 Farrell--Tate cohomology of $\op{PSL}_4(\Z)$, the rank 4 projective special linear group over the integers. Above the virtual cohomological dimension (which is 6 according to the formula of Borel and Serre \cite{Borel:Serre}), Farrell--Tate cohomology (with mod $\ell$ coefficients) agrees with group cohomology, thus we also compute the dimension of the mod $2$ group cohomology $\Homol^q(\op{PSL}_4(\Z);\thinspace \F_2)$. The main result is the following:

\begin{theorem}\label{result}
The mod-2 Farrell--Tate cohomology of $\op{PSL}_4(\Z)$ has the following dimensions over $\F_2$ in degrees $0 \leq q \leq 6$:
$$
\begin{array}{|l|c|c|c|c|c|c|c|c|c|c|c|c|c|c|c|c|c|c|c|c|c|}
 \hline q  & 0 & 1 & 2 & 3 & 4 & 5 & 6\\ \hline
 \dim_{\F_2}\Farrell^q(\op{PSL}_4(\Z);\thinspace \F_2) & 1 & 0 & 5 & 7 & 7 & 17 & 27 \\ \hline
\end{array}
$$
\normalsize
In degrees $7 \leq q \leq 42$,
the dimension of the Farrell--Tate cohomology
$\dim_{\F_2}\Farrell^q(\op{PSL}_4(\Z);\thinspace \F_2)$
and hence the dimension of the group cohomology
$\dim_{\F_2}\Homol^q(\op{PSL}_4(\Z);\thinspace \F_2)$,
equals the number
$\dim_{\F_2}E_2^{0,q}$ computed in Table~\ref{7-42}.
More generally, the Hilbert--Poincar\'e series for the mod 2 Farrell--Tate cohomology of $\op{PSL}_4(\Z)$ is the following:
\[
\sum\limits_{q=0}^\infty \dim_{\F_2}\Farrell^q(\op{PSL}_4(\Z);\thinspace \F_2)\cdot T^q = \frac{2T^{11}-T^{10}-4T^9-3T^8+3T^7+5T^6-3T^4+T^3+4T^2-T+1}{(T^2+T+1)^2(T+1)(T-1)^4}
\]
\end{theorem}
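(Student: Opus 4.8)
The plan is to realise $\Farrell^*(\op{PSL}_4(\Z);\F_2)$ as the equivariant Farrell--Tate cohomology of the action of $\op{PSL}_4(\Z)$ on a cocompact CW-model $X$ for the symmetric space of $\op{SL}_4(\mathbb{R})$ (for instance the well-rounded retract, whose dimension equals the vcd $6$). By the general theory of Farrell--Tate cohomology, with $\F_2$-coefficients this cohomology is carried entirely by the \emph{2-torsion subcomplex} $X_{(2)}$, the union of those cells whose stabiliser contains an element of order $2$; the inclusion $X_{(2)}\hookrightarrow X$ induces an isomorphism on $\Farrell^*(-;\F_2)$. First I would isolate $X_{(2)}$ explicitly from the cell structure and the stabiliser data of the model, recording the $\op{PSL}_4(\Z)$-conjugacy classes of cells and their stabilisers.

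Next I would apply the discrete-Morse-theoretic torsion subcomplex reduction developed earlier in the paper to collapse $X_{(2)}$ to a much smaller, equivariantly homotopy equivalent complex $Y$. Concretely this means choosing an acyclic, $\op{PSL}_4(\Z)$-equivariant matching on the poset of orbits of cells that is compatible with the stabiliser filtration, so that the surviving critical cells assemble into $Y$ together with their stabilisers. I expect $Y$ to be at most one-dimensional, with critical vertices carrying (maximal) finite $2$-subgroups and critical edges recording the gluing; the relevant finite stabilisers should lie among $\Kleinfourgroup$, $\CtwoSq$, $\CtwoCube$, $\Dfour$, $\Deight$, $\Afour$ and $\Sfour$, whose Tate cohomology rings with $\F_2$-coefficients are classically known and eventually periodic.

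With $Y$ in hand, the equivariant spectral sequence
\[
E_1^{p,q}=\bigoplus_{\sigma\in Y_p/\op{PSL}_4(\Z)}\Farrell^q\bigl(\op{PSL}_4(\Z)_\sigma;\F_2\bigr)\;\Longrightarrow\;\Farrell^{p+q}\bigl(\op{PSL}_4(\Z);\F_2\bigr)
\]
has only two columns once $Y$ is one-dimensional, so it degenerates at $E_2$ and
\[
\dim_{\F_2}\Farrell^{n}=\dim_{\F_2}E_2^{0,n}+\dim_{\F_2}E_2^{1,n-1},
\]
where $E_2^{0,q}=\ker d_1^{0,q}$ and $E_2^{1,q}=\op{coker}d_1^{0,q}$ for the restriction differential $d_1$ from vertex to edge stabilisers. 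Computing $d_1$ amounts to understanding the restriction maps between the known Tate cohomologies of the finite stabilisers, and tabulating kernels and cokernels degree by degree yields the low-degree dimensions $1,0,5,7,7,17,27$. The high-degree assertion then rests on showing that $d_1^{0,q}$ is surjective throughout the stated range, so that $E_2^{1,q-1}=0$ and the total dimension collapses to $\dim_{\F_2}E_2^{0,q}$; this surjectivity should follow from the eventual periodicity of the finite-group Tate cohomologies, which stabilises the rank of $d_1$ once $q$ is large enough and makes the $0$-column periodic, so that one period (degrees $7$ to $42$) suffices to record it in Table~\ref{7-42}.

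The main obstacle is twofold. The computational heart is the explicit determination of the $\op{PSL}_4(\Z)$-conjugacy classes of finite subgroups containing $2$-torsion together with their fusion and incidence data, since the entries of $d_1$ depend on how the stabilisers sit inside one another; getting these restriction maps right is what makes the low-degree dimensions delicate. The conceptual subtlety is verifying that the discrete Morse matching is genuinely acyclic and equivariant, so that the reduction preserves Farrell--Tate cohomology, and then confirming that the reduced complex $Y$ is low-dimensional enough to force $E_2$-degeneration. Once both are secured, the low-degree values follow from direct bookkeeping of $\ker d_1$ and $\op{coker}d_1$, and the high-degree values from the stabilisation of $d_1$ to a surjection.
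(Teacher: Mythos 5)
Your overall strategy---pass to the $2$-torsion subcomplex, reduce it by stabilizer-compatible discrete Morse theory, and evaluate the equivariant spectral sequence of the reduced complex---is indeed the paper's strategy. But two of the steps you rely on to close the argument are genuinely wrong, not just unverified. First, your expectation that the reduced complex $Y$ is at most one-dimensional, so that the spectral sequence has only two columns and degenerates at $E_2$ for formal reasons, does not hold: the actual reduced $2$-torsion subcomplex (Figure~\ref{reduced2torsionSubcomplex}) is two-dimensional, with eight vertices, thirteen edges and six triangles $t_1,\dots,t_6$, so the spectral sequence lives in three columns $p=0,1,2$ (Table~\ref{E2page}). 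Degeneration at $E_2$ is therefore not automatic; in the paper it is read off from the \emph{computed} $E_2$-page, namely that $d_1^{0,q}$ is surjective in all observed degrees $q\geq 2$ (so the page is concentrated in the column $p=0$ there), and that the few nonzero entries in low degrees sit in positions where $d_2$ must vanish. Your formula $\dim\Farrell^n=\dim E_2^{0,n}+\dim E_2^{1,n-1}$ would also need an $E_2^{2,n-2}$ term. Relatedly, your guessed list of stabilizers omits the ones that carry most of the difficulty: the vertex stabilizers include $\Cfour$ and the groups $G_1$ (of order $288$, an extension $1\to\Afour\times\Afour\to G_1\to\Ctwo\to 1$) and $G_2$ (of order $96$), whose cohomology rings and restriction maps are the core of the $d_1$ computation (Tables~\ref{stabilizers-and-their-cohomology-rings} and~\ref{d1}).

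Second, your argument for the range $7\leq q\leq 42$ is based on ``eventual periodicity of the finite-group Tate cohomologies,'' and this fails outright: the stabilizers $\CtwoSq$, $\CtwoCube$, $\Sfour$, $G_1$, $G_2$ contain elementary abelian $2$-groups of rank $\geq 2$, so their mod~$2$ (Tate) cohomology is \emph{not} periodic---it grows polynomially. Consequently the column $E_2^{0,q}$ is not periodic either; the dimensions in Table~\ref{7-42} grow from $31$ at $q=7$ to $3579$ at $q=42$, so degrees $7$ to $42$ are not ``one period'' and no stabilization of the rank of $d_1$ occurs. What the paper actually does in this range is compute $\dim\ker d_1^{0,q}$ explicitly, degree by degree, from the assembled differential; the theorem's claim there is precisely that this computed number equals the Farrell--Tate dimension, which follows from the observed concentration in the column $p=0$. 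Finally, be aware that what you describe as ``direct bookkeeping'' of the restriction maps is the paper's main technical achievement: making the ring presentations of the stabilizer cohomologies coherent so that the sixteen incidence diagrams commute simultaneously (checked via $d_1^{1,*}\circ d_1^{0,*}\equiv 0 \bmod 2$, Figures~\ref{v1}, \ref{v3}, \ref{v5}) is where the six years of work went, and a proof proposal that treats it as routine has not yet engaged with the real content of the theorem.
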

Since with field coefficients, cohomology is precisely the dual of homology~\cite{Hatcher}*{chapter 3, section ``Cohomology of Spaces''}, we have the same dimensions for the homology groups $ \Homol_q(\op{PSL}_4(\Z);\thinspace \F_2)$ $\cong \Homol^q(\op{PSL}_4(\Z);\thinspace \F_2)$. Together with the mod $3$ and mod $5$ cohomology results of the authors~\cite{psl4z}, the Universal Coefficient Theorem gives us a tight control over the integral group homology above the virtual cohomological dimension (vcd), because in those degrees only $2$-, $3$- and $5$-primary torsion can occur, other primes not being present in the orders of elements of the isotropy groups. In the degrees below the vcd, there are the results of Dutour, Ellis and Sch\"urmann~\cite{Dutour:Ellis:Schuermann}, hence a quite complete picture of $\Homol_*(\op{PSL}_4(\Z);\thinspace \Z)$ is achieved.

The computations of a contractible cell complex with a proper action of $\op{PSL}_4(\Z)$ date back to perfect forms calculations by {\v S}togrin in 1974 \cite{Stogrin} and independently and notably those of Lee and Szczarba in 1978 \cite{LeeSzczarbaTorsion}; such a cell complex was available on computers just after the turn of the millenium~\cites{Ash:Gunnells:McConnell, Dutour:Ellis:Schuermann, Elbaz-Vincent:Gangl:Soule, Elbaz-Vincent:Gangl:Soule2}; one of the authors of this paper (A.R.) did pick up the task of computing the cohomology of $\op{PSL}_4(\Z)$ at small primes as his PhD thesis project in 2007, but in order to deal with prime numbers present in the orders of cell stabilizers, a whole new set of sophisticated tools had to be developed over the years, documented in a series of publications by the authors. The final tool was recently contributed by Ellis~\cite{HAP}. That is why the mod $2$ cohomology of $\op{PSL}_4(\Z)$ has only been computed in the present paper, 47 years after the mod $2$ cohomology computation for $\op{SL}_3(\Z)$ by Soul\'e~\cite{Soule}.

In principle, our results could also be refined to compute the ring structure on cohomology $\Homol^*(\op{PSL}_4(\Z);\F_2)$ above the virtual cohomological dimension. From our computations it follows that the equivariant spectral sequence computing mod 2 cohomology degenerates at the $E_2$-page, and for degrees above the virtual cohomological dimension, all information is contained in the column $E^{0,q}_2$. This means that the natural restriction map
\[
\Farrell^*(\op{PSL}_4(\Z);\F_2)\to \bigoplus_{\sigma\in\mathcal{X}}\Homol^*(\Gamma_\sigma;\F_2)
\]
from $\op{PSL}_4(\Z)$ to the stabilizer groups $\Gamma_\sigma$ on the reduced $\ell$-torsion subcomplex is actually injective above the virtual cohomological dimension. In particular, any cup products of classes could in principle be computed as cup products in the cohomology rings of finite stabilizer subgroups (which are known, see Table~\ref{stabilizers-and-their-cohomology-rings}).

% In Sections~\ref{sec:torsionsubcomplex} to~\ref{sec:farrelltate}, we detail our computations, and in Section~\ref{sec:Steinberg}, we point out their consequences on Steinberg homology.
\begin{table}
\caption{The $E_2$-page of the equivariant spectral sequence converging to the mod $2$ Farrell--Tate cohomology of PSL$_4(\Z)$ is, in degrees $q\geq 2$, concentrated in the single column $p = 0$.
We record the following dimensions over $\F_2$ for $E_2^{0,q}$ for $7 \leq q \leq 42$:}
\label{7-42}

\footnotesize
$$
\begin{array}{|l|c|c|c|c|c|c|c|c|c|c|c|c|c|c|c|c|c|c|c|c|c|}
 \hline q             &  7 &  8 &  9 & 10 & 11 & 12 & 13 & 14 & 15 & 16 & 17 & 18 & 19 & 20 & 21 & 22 & 23\\ \hline
 \dim_{\F_2}E_2^{0,q} & 31 & 50 & 67 & 78 & 107& 134& 153& 195& 233& 263& 319& 371& 413& 486& 553& 610& 701
\\ \hline
\end{array}
$$
$$
\begin{array}{|l|c|c|c|c|c|c|c|c|c|c|c|c|c|c|c|c|c|c|c|c|c|}
 \hline q             & 24  & 25 & 26 & 27 & 28 & 29 & 30 & 31 & 32 & 33 & 34 & 35 & 36 & 37 \\ \hline
 \dim_{\F_2}E_2^{0,q} & 786 & 859& 971 & 1075 & 1167 & 1301 & 1427 & 1539 & 1698 & 1847 & 1982 & 2167 & 2342 & 2501
\\ \hline
\end{array}
$$

$$
\begin{array}{|l|c|c|c|c|c|c|c|c|c|c|c|c|c|c|c|c|c|c|c|c|c|}
 \hline q             & 38 & 39 & 40 & 41 & 42\\ \hline
 \dim_{\F_2}E_2^{0,q} & 2715 & 2917 & 3103 & 3347 & 3579
\\ \hline
\end{array}
$$
\normalsize
\end{table}

For carrying out the computation of mod 2 group cohomology, we provide in this paper a further tool, which we hope can help other mathematicians to compute small prime cohomology of more complicated arithmetic groups. Namely, in Section~\ref{sec:morse}, we adapt discrete Morse theory for the computation of mod $\ell$ Farrell--Tate cohomology of a complex of groups. The modification we propose produces very small complexes of groups describing, for a given complex of groups, a bi-graded complex of $\F_\ell$-vector spaces which is quasi-isomorphic to the $E_1$-page of the associated equivariant spectral sequence. In particular, the reduced $E_1$-page obtained from discrete Morse theory completely recovers the original $E_2$-page of the equivariant spectral sequence. % Our modification deals with the fact that we are not just interested in the cohomology of a cell complex, but actually the equivariant cohomology of a complex of groups.
If we are interested specifically in Farrell--Tate or group cohomology at a single prime number $\ell$, our discrete Morse theory method is an improvement over the previously used $\ell$-torsion subcomplex reduction methods. Because our $\ell$-torsion modification of discrete Morse theory produces significantly smaller cell complexes, it substantially simplifies computations with the equivariant spectral sequence. % The Morse-theoretic torsion subcomplex reduction provides a description of the (mod $\ell$) $E_1$-page of the equivariant spectral sequence which yields again the original $E_2$-page, hence the computations provide descriptions of Farrell--Tate cohomology in all degrees.

We do not aim here at developing a very general equivariant version of discrete Morse theory, as this has already been done in the literature~\cites{Freij, Yerolemou:Nanda}. Rather, we want to provide a specific tool for studying Farrell--Tate or group cohomology of (arithmetic) groups at a single prime number~$\ell$. Our approach differs from the one of Yerolemou and Nanda~\cite{Yerolemou:Nanda}, in that we can allow infinite groups where \cite{Yerolemou:Nanda} requires the group under study to be finite. Our approach differs from Freij's approach~\cite{Freij} in that the focus on a single prime $\ell$ and the use of $\ell$-torsion subcomplexes can simplify the description of the equivariant spectral sequence even further (while of course losing some information not visible in $\F_\ell$-cohomology).

\subsubsection*{Structure of the paper}
We first discuss how torsion subcomplex reduction can be realized by a version of discrete Morse theory taking into account fusion control for stabilizer subgroups in Section~\ref{sec:morse}. The example of the reduced 2-torsion subcomplex for ${\rm PSL}_4(\mathbb{Z})$ is discussed in Section~\ref{sec:torsionsubcomplex}. In Section~\ref{sec:d1}, we assemble the $d_1$ differential of the equivariant spectral sequence, which was the major technical task of this paper. The spectral sequence computations for the mod 2 Farrell--Tate cohomology of ${\rm PSL}_4(\mathbb{Z})$ are carried out in Section~\ref{sec:farrelltate}. In Section~\ref{sec:hpseries}, we compute the Hilbert--Poincar\'e series of mod 2 Farrell--Tate cohomology of $\op{PSL}_4(\Z)$, and in Section~\ref{sec:Steinberg}, we point out the consequences of our computations on Steinberg homology.

\subsubsection*{Acknowledgements}
We would like to express our special thanks to Graham Ellis, who, by building a new feature into his HAP package in GAP upon our request, made it possible to remove the ambiguities in assembling our $d_1$ differential.
We would like to heartily thank Bill Allombert (Pari/GP Development Headquarters), who helped us coding the GP script which formerly did run through all possibilities of assembling our $d_1$ differential, and still checks that the assembly is correct and produces the sanity check diagrams.
We would like to warmly thank Simon King for having released his Mod-$p$ Group Cohomology Package and answered our questions about it, and Matthias K\"oppe and especially Dmitrii Pasechnik for a lot of help on getting this package to run again in the contemporary version of SAGE.
We are indebted to Peter Patzt for pointing us in Section~\ref{sec:Steinberg} to the relevant theorems of Lee and Szczarba.
Finally, we would like to thank Ethan Berkove for discussing the equivariant spectral sequence with us.
We would like to acknowledge financial support by the ANR grant MELODIA (ANR-20-CE40-0013).
Among the authors, Tuan Anh Bui is fully funded by Vietnam National University - Ho Chi Minh City (VNU-HCM) under grant number C2023-18-01.

\begin{table} \caption{Numbers of cells in the $2$-torsion subcomplex for PSL$_4(\Z)$ before reduction, obtained after rigid facets subdivsion of the studied cell complex, sorted into isomorphism types of their stabilizers. Here, $G_1$ and $G_2$ are the groups (288,1026) and (96,227), respectively, in GAP's SmallGroups library. They are given by non-trivial group extensions
$1 \to \Afour \times \Afour \to G_1 \to \Ctwo \to 1$ and $1 \to G_0 \to G_2 \to \Ctwo \to 1$ for $1 \to (\Z/2\Z)^4 \to G_0 \to \Z/3\Z \to 1$.}
\label{2-torsion-cell-numbers}
\hspace{-3.1em}
\footnotesize
\[
\begin{array}{|l|r|r|r|r|r|r|r|r|l|} \hline %&&&&&&&&&\\
\text{Stabilizer type} & \Afour &    G_1 & \CtwoSq& \Sfour &\Ctwo&\Deight & \CtwoCube & G_2 & \Cfour \\
\hline %&&&&&&&&& \\
\text{Vertices}    & 2      &       1   &     5  &  4     &  1  &   2    &       1   &         1        &  0  \\
\text{Edges}       & 2      &       0   &    24  &  2     &101  &   5    &       1   &         0        &  5  \\
\text{2-cells}     & 0      &       0   &    27  &  0     &326  &   0    &       0   &         0        &  4  \\
\text{3-cells}     & 0      &       0   &     8  &  0     &340  &   0    &       0   &         0        &  0  \\
\text{4-cells}     & 0      &       0   &     0  &  0     &116  &   0    &       0   &         0          &  0  \\ \hline
\end{array}
\]
\normalsize
\end{table}

\section{Torsion subcomplex reduction via discrete Morse theory}
\label{sec:morse}

Let $\Gamma$ be a discrete group acting properly on a cell complex $\mathcal{X}$. The object which we shall reduce using discrete Morse theory, is the $\ell$-torsion subcomplex for the $\Gamma$-action on $\mathcal{X}$, at a prime number~$\ell$.
In order to compute the mod $\ell$ Farrell--Tate cohomology of a group from its proper action on a contractible cell complex, it is enough to study the action on the $\ell$-torsion subcomplex, see~\cite{TransAMS}.
Our reduction of the latter brings further arithmetic groups within reach.

\begin{definition}\label{subcomplex}
 Let $\Gamma$ be a discrete group acting properly on a cell complex $\mathcal{X}$. For $\ell$ a prime number, the \emph{$\ell$-torsion subcomplex} is the set of all cells of $\mathcal{X}$ whose $\Gamma$-stabilizers contain some element(s) of order $\ell$.
\end{definition}

For the $\ell$-torsion subcomplex to be guaranteed to be a cell complex, and to consist only of fixed points of order-$\ell$-elements (so to coincide with the $\ell$-singular part), we need a rigidity property: We want each cell stabilizer to fix its cell pointwise. In theory, it is always possible to obtain this rigidity property via the barycentric subdivision. In practice, to avoid a memory stack overflow, we need to use the cell subdivision algorithms ``rigid facets subdivision'' and ``virtually simplicial subdivision'' provided in previous work of the authors~\cite{psl4z}, because the barycentric subdivision of an $n$-dimensional cell complex can multiply the number of cells by $(n + 1)!$.

The rigid cell complexes returned by rigid facets subdivison are rather big, compared to the non-rigid complexes we start with (-see for example table 1 in \cite{psl4z} for information on the full cell complex; for the 2-torsion subcomplex, we reprint Table~\ref{2-torsion-cell-numbers} here from that paper with the kind permission of the authors;-).

To reduce the size of the complexes (as well as the size of the resulting spectral sequences), we now discuss a variant of discrete Morse theory taking into account the additional information of stabilizer subgroups: the gradient flow is only allowed to proceed in directions where the cohomology of the stabilizer subgroups does not change, as specified in Condition~\ref{cancelling-conditions} below. In our computation, the number of critical cells is significantly smaller than the number of cells before reduction. Moreover, the gradient flow restriction on stabilizer subgroups implies that the output of the modified discrete Morse algorithm produces a complex quasi-isomorphic to the $E_1$-page of the equivariant spectral sequence, thus recovering the correct $E_2$-page, see Theorem~\ref{correctness-of-algorithm}.

In order to reduce the $\ell$-torsion subcomplex at a prime number $\ell$ (cf. Definition~\ref{subcomplex} above),
our conditions for cancelling an $n$-cell $\tau$ against one of its boundary $(n-1)$-cells $\sigma$,
are the following ones, already established in previous work~\cite{Tbilisi}.

 \begin{Condition}\label{cancelling-conditions} As conditions for cancelling an $n$-cell $\tau$ against one of its boundary $(n-1)$-cells $\sigma$, and hence for admitting an arrow from $\sigma$ to $\tau$ into the discrete gradient vector field, we impose: \begin{itemize}
\item No higher-dimensional cells of the $\ell$-torsion subcomplex touch $\tau$,
\item The interior of two cells connected by a path of the discrete gradient vector field do not contain two points which are on the same orbit under the group action,
\item The inclusion of the stabilizer of $\tau$ into the stabilizer of $\sigma$ induces an isomorphism on mod $\ell$ cohomology.
\end{itemize} \end{Condition}
Practical criteria for checking if an inclusion of stabilizers induces an isomorphism on mod $\ell$ cohomology are given in~\cite{Tbilisi}*{Condition B'}, based on control of $\ell$-fusion. As discussed in \cite{Tbilisi}, the mod $\ell$ equivariant cohomology is not affected by such a cancellation.

\subsection{Construction of a discrete gradient vector field}

\begin{figure}
  \caption{Example: The \texorpdfstring{$2$}{2}-torsion subcomplex for SL\texorpdfstring{$_3(\mathbb{Z})$}{(3,\textbf{Z})}}
\label{nice-vector-field}
\begin{mdframed}
On the $2$-torsion subcomplex of the cell complex described by Soul\'e~\cite{Soule},
 obtained from the action of SL$_3(\mathbb{Z})$ on its symmetric space (diagram from previous work of one of the authors~\cite{Tbilisi}), we draw here a discrete gradient vector field (in bold arrows).
\begin{center}
 \scalebox{0.7} % Change this value to rescale the drawing.
 {
 \includegraphics{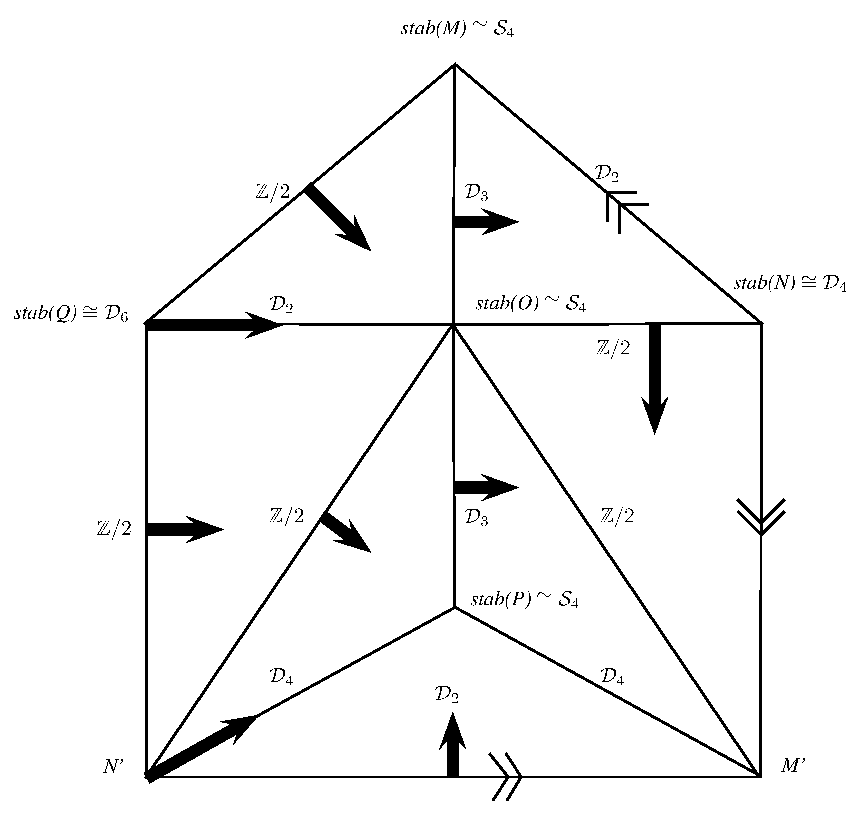}
}
\end{center}
Here, the three edges $NM$, $NM'$ and $N'M'$ have to be identified as indicated by the double arrows.
All of the seven triangles belong with their interior to the $2$-torsion subcomplex,
each with stabilizer $\Z/2$, except for the one which is marked to have stabilizer $\Dtwo$.
Along the discrete gradient vector field indicated by the simple arrows, we reduce this subcomplex to
\begin{center}
%  \scalebox{1} % Change this value to rescale the drawing.
% {
\begin{pspicture}(-1.9,-0.2)(8.0,0.3)
        \uput{0.1}[90](2.0,0.0){ $\Sfour$}
        \psdots(2,0.0)
        \psline(2,0.0)(6,0.0)
                        \uput{0.1}[90](3.0,0.0){ $\Z/2$}
        \uput{0.1}[90](4.0,0.0){ $\Sfour$}
        \psdots(4,0.0)
                        \uput{0.1}[90](5.0,0.0){ $\Dfour$}
               \uput{0.1}[90](6.0,0.0){ $\Sfour$}
        \psdots(6,0.0)
\end{pspicture}
% }
\end{center}
which is the geometric realization of Soul\'e's diagram of cell stabilizers.
This yields the mod~$2$ Farrell--Tate cohomology as specified by Soul\'e~\cite{Soule}.
\end{mdframed}
\end{figure}

\begin{figure}
\caption{We can also put an essentially different discrete gradient vector field on the $2$-torsion subcomplex from Figure~\ref{nice-vector-field}:}
\label{ugly-vector-field}
\begin{mdframed}
\begin{center}
\scalebox{0.7} % Change this value to rescale the drawing.
{
 \includegraphics{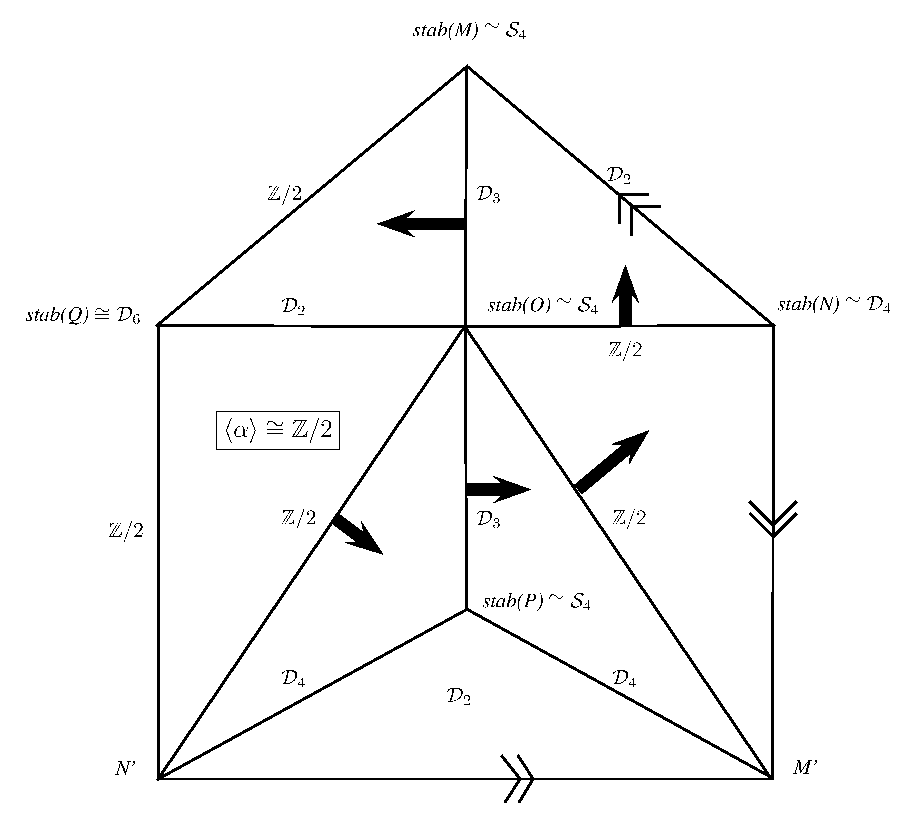}
}
\end{center}
Now, the $2$-cells are not being cut off from the boundary, but they are cancelled against edges in the interior of the subcomplex.
As a consequence, the stabilizer $\langle \alpha \rangle$ of one of the two triangles which are not cancelled, needs to have monomorphisms into the stabilizers of each of the edges which form the new boundary of its cell after cancellation: $N'P$, $PM'$, $M'N$, $NM$, $MQ$, $QO$.
These monomorphisms are given by the inclusion of the generator $\alpha$ into the stabilizer of the first cancelled edge $N'O$, which is provided by the rigidity of the cell complex; then the isomorphism of stabilizers which allows to put the arrow starting from that edge, then the inclusion into the stabilizer of the edge $PO$ (again by rigidity); then the isomorphism on mod $2$ cohomology which allows to put the arrow starting from the edge $PO$, and so on.
\end{mdframed}
\end{figure}

In this section, we describe an algorithm constructing a discrete vector field for a discrete Morse function in the sense of Forman~\cite{Forman}, with the extra property that $\ell$-fusion is controlled along its arrows.
We obtain it by inserting a fusion control condition into the \emph{Discrete vector field on a regular CW-complex} algorithm of Ellis \& Hegarty~\cite{Ellis:Hegarty}, see Algorithm~\ref{ALGdiscretevectorfield} below. For this purpose, we need to recall the following definitions from the literature.

\begin{definition}
A discrete vector field is said to be \emph{admissible} if it contains no circuits and no chains that extend infinitely to the right. When the underlying CW-complex is finite, a discrete vector field is admissible if it contains no circuits.\footnote{This is the condition that the discrete vector field is the gradient vector field of a discrete Morse function.}
\end{definition}

\begin{definition}
We say that an admissible discrete vector field is \emph{maximal} if it is not possible to add an arrow while retaining admissibility. A cell in the CW-complex is said to be \emph{critical} if it is not involved in any arrow.
\end{definition}

In order to take the cell stabilizers into account, we need to equip the CW-complex with an extra structure, ensured by the terminology ``simple complex of groups'' introduced by Bridson and Haefliger \cite{Bridson:Haefliger}*{II.12.11 on page 375}:

\vbox{
\begin{definition}
Let $X$ be a regular CW-complex. A \emph{simple complex of groups} on $X$ is a set $\{ G_\sigma, \psi_{\tau\sigma}\}_{\sigma, \tau\sigma}$, where
\begin{itemize}
\item for each cell $\sigma\in X$, we have a group $G_\sigma$,
\item for each cell $\tau$ in the boundary $\partial \sigma$ of a cell $\sigma$ of $X$, we have an injective homomorphism $\psi_{\tau\sigma}:G_{\sigma}\to G_{\tau}$
such that if $\tau \in \partial \sigma$ and $\sigma \in \partial \rho$, then $\psi_{\tau\rho} =\psi_{\tau\sigma}\psi_{\sigma\rho}$.
\end{itemize}
\end{definition}
}

\begin{algorithm}[h]
\caption{The \emph{Discrete vector field on a regular CW-complex} algorithm of Ellis \& Hegarty~\cite{Ellis:Hegarty}, with our fusion control condition inserted.}
\label{ALGdiscretevectorfield}

\begin{alginc}

\State \bf Input: \rm  A finite regular CW-complex $Y$ carrying a simple complex of groups.

\State \bf Output: \rm  A  maximal admissible discrete vector field on $Y$.

\Procedure{}{}

\State Partially order the cells of $Y$ in any fashion.

\State At any stage of the algorithm each cell  will have precisely one of the following three states:
\State (i) {\em critical}, (ii)
{\em potentially critical}, (iii) {\em non-critical}.

\State Initially deem all cells of $Y$ to be potentially critical.

\While{there exists a potentially critical cell}

\While{there exists a pair of potentially critical cells $s,t$
  such
 that:  dim$(t)=$dim$(s)+1$;  $s$ lies in the boundary of $t$;  no other
potentially critical cell of dimension dim$(s)$ lies in the boundary of $t$,
and Condition~\ref{cancelling-conditions} is satisfied for the pair $(s,t)$;}

\State Choose such a pair $(s,t)$ with $s$ minimal in the given partial ordering.

\State Add the arrow $s\rightarrow t$ and deem $s$ and $t$ to be  non-critical.

\EndWhile

\If{there exists a potentially critical cell}

\State Choose a minimal potentially critical cell and deem it
 to be  critical.

\EndIf

\EndWhile

\EndProcedure
\end{alginc}
\end{algorithm}

\subsection{Critical cells and the equivariant spectral sequence}

Once a maximal admissible discrete vector field has been constructed, we can reduce the torsion subcomplex by cancelling the two cells involved in an arrow against each other, such that only critical cells remain.
For this purpose, we have to take care of what we shall call the boundary heritage.

\begin{definition}
The \emph{boundary heritage} of an $n$-cell which has some of its boundary cells cancelled by Morse reduction, consists for each cancelled boundary $(n-1)$-cell $\sigma$ of the non-cancelled boundary cells of the $n$-cell at the other end of the path leading to $\sigma$ via the discrete gradient vector field.
\end{definition}

We can efficiently determine the boundary heritage by adapting Forman's construction of the Morse differential \cite{Forman} to simple complexes of groups, as follows.
\\
We take an $(n+1)$-cell $\sigma$. For each $n$-cell $\tau\in \partial\sigma$, we can take the sum over all paths $p$ (with signs coming from orientations of cells, the orientations being obtained as described in \cite{psl4z}*{algorithm~3}) starting at $\tau$, of the restriction maps $\op{H}^\bullet(G_{\rho_p})\to\op{H}^\bullet(G_\sigma)$. Here, $G_{\rho_p}$ denotes the group at the end of the path $p$, and the group homomorphism is the composition of the isomorphism $\Homol^*(G_\tau)\cong \Homol^*(G_{\rho_p})$ associated to the path $p$ and the inclusion $G_\sigma\hookrightarrow G_\tau$.

\bigskip

Our above reflections culminate in the following theorem, which allows us to simplify the beginning of the equivariant spectral sequence.

\begin{theorem} \label{correctness-of-algorithm}
  Let $\Gamma$ be a discrete group acting cellularly, properly and cocompactly on a cell complex $\mathcal{X}$, and fix some prime $\ell$. We denote by $E^{p,q}_1(\mathcal{X})$ the $E_1$-page of the mod $\ell$ equivariant spectral sequence computing the mod $\ell$ Farrell--Tate cohomology of the $\Gamma$-complex $\mathcal{X}$.

  Assume that the associated $\ell$-torsion subcomplex $\mathcal{X}_\ell$ is again a $\Gamma$-complex. Equip $\mathcal{X}_\ell$ with a maximal admissible (i.e., gradient) discrete vector field, as constructed by Algorithm~\ref{ALGdiscretevectorfield}. Then we consider the complex of graded $\mathbb{F}_\ell$-vector spaces
  \[
  \left(\bigoplus_{\sigma\in\left(\mathcal{X}_\ell/\Gamma)\right)^{(p)} \thinspace{\rm critical!}}\Homol^q(\Gamma_\sigma;\thinspace\F_\ell);\thinspace d'_1\right)
  \]
  obtained as direct sum of mod $\ell$ cohomologies of stabilizers of \emph{critical} $p$-cells in the $\ell$-torsion subcomplex $\mathcal{X}_\ell$. The differential $d_1'$ is given by composition of the original $d_1$ differential in $E_1^{p,q}(\mathcal{X})$ with the isomorphisms to the corresponding stabilizer groups in the boundary heritage, as described above. 
  
  Then the natural map
  \[
  \bigoplus_{\sigma\in\left(\mathcal{X}_\ell/\Gamma\right)^{(p)} \thinspace{\rm critical!}} \Homol^q(\Gamma_\sigma;\thinspace\F_\ell)\to \bigoplus_{\sigma\in\left(\mathcal{X}/\Gamma\right)^{(p)}} \Homol^q(\Gamma_\sigma;\thinspace\F_\ell)=E^{p,q}_1(\mathcal{X})
  \]
  is a quasi-isomorphism of complexes of graded $\mathbb{F}_\ell$-vector spaces. In particular, its cohomology is naturally isomorphic to the $E_2$-page $E^{p,q}_2(\mathcal{X})$ of the equivariant spectral sequence associated to the $\Gamma$-complex $\mathcal{X}$.
%  Reducing the associated $\ell$-torsion subcomplex consisting of finitely many orbits along a discrete gradient vector field constructed by Algorithm~\ref{ALGdiscretevectorfield}, and attaching the boundary heritage to replace lost boundary cells, induces an isomorphism on the $E_2$ page of the equivariant spectral sequence for the group action $G\looparrowright\mathcal{X}$, and hence on equivariant mod $\ell$ cohomology.
\end{theorem}

\begin{proof}
  We already know that inclusion $\mathcal{X}_\ell\to\mathcal{X}$ induces an isomorphism on the $\Gamma$-equivariant mod~$\ell$ (Farrell--Tate) cohomology, see \cite{Tbilisi}. So we only need to show that reducing to the critical cells of $\mathcal{X}_\ell$ doesn't change the cohomology of the $E_1$-page for the $\Gamma$-complex $\mathcal{X}_\ell$.
  
As we only have to deal with finitely many orbits modulo the group action, we can break down the reduction along the discrete vector field into finitely many steps of cancelling an $n$-cell against one of its boundary cells.
\begin{itemize}
 \item For a path of $n$- and $(n-1)$-cells through the discrete gradient vector field which starts at the boundary of the orbit space (see Figure~\ref{nice-vector-field}), we can cancel one pair of cells after another without affecting the boundary of other cells, and each time, such a cancellation lowers the rank of the $d_1^{n-1,*}$-differential as much as the rank of the $E_1^{n-1,*}$- and $E_1^{n,*}$-terms of the equivariant spectral sequence. Essentially, for each arrow in the discrete vector field relating the $n$-cell $\sigma$ and its boundary cell $\tau$, we are splitting off an obviously contractible complex $0\to \Homol^*(\Gamma_\tau;\thinspace\F_\ell)\to \Homol^*(\Gamma_\sigma;\thinspace\F_\ell)\to 0$ from the $E_1$-page. Hence the $E_2$-page remains invariant, and thanks to our condition that no higher-dimensional cells of the $\ell$-torsion subcomplex touch the concerned cells, no higher-degree differentials have targets on them, and hence the equivariant mod~$\ell$ cohomology remains invariant as well.
\item For a path through the discrete gradient vector field which is contained in the interior of the orbit space (see Figure~\ref{ugly-vector-field}), keeping track of the boundary heritage and attaching it to the neighbouring cells after each cancellation, is what makes sure that the cell structure is kept equivariant. Then we reach the claim via the same rank arguments as above.
\item Fortunately, we cannot get into the situation of a circuit of arrows, because the discrete gradient vector field constructed by Algorithm~\ref{ALGdiscretevectorfield} is admissible.\qedhere
\end{itemize}
\end{proof}

\begin{remark}
  Replacing the $\ell$-fusion control condition in Condition~\ref{cancelling-conditions} by the stronger condition that the two involved cell stabilizers are isomorphic recovers Freij's approach \cite{Freij} to equivariant discrete Morse theory. This also provides an analogous algorithm for computing Bredon homology (useful if we want to continue the authors' Bredon homology computations~\cite{psl4z} on larger cell complexes).
\end{remark}

\subsection{Implementation and experimental evaluation}
\label{sec:experiments}

Theorem \ref{correctness-of-algorithm} allows us to simplify the beginning of the equivariant spectral sequence, using Algorithm~\ref{ALGdiscretevectorfield}. The authors' implementation has been released in the HAP package of GAP~\cite{HAP}, reducing the torsion subcomplexes along the discrete vector field~\cite{Bui}. On the $2$-torsion subcomplex of ${\rm PSL}_4(\mathbb{Z})$, this brings down the runtime of the reduction procedure from a matter of days to a matter of seconds or at most minutes. That means, we could still do the $2$-torsion calculation for ${\rm PSL}_4(\mathbb{Z})$ with the previous reduction procedure. But however, we choose this as our example for the rest of this paper, because for readers who want to tackle computations which are out of reach of the obsolete reduction procedure (for instance, on the $2$-torsion subcomplexes of GL$_3$ over imaginary quadratic integers), our example computation can show how to overcome the difficulties which come next, once the reduction has been achieved.

\section{Showcase example: The 2-torsion subcomplex for \texorpdfstring{${\rm PSL}_4(\mathbb{Z})$}{PSL4Z}}
\label{sec:torsionsubcomplex}

The authors have run a machine computation, starting with a Vorono\"i cell complex with an action of ${\rm PSL}_4(\mathbb{Z})$, constructed by Dutour Sikiric~\cite{Dutour:Ellis:Schuermann}, and available in the computer algebra system GAP~\cite{GAP} as part of the package HAP~\cite{HAP}.

More precisely, ${\rm PSL}_4(\mathbb{Z})$ acts on a certain space of quadratic forms, and there is an equivariant retraction to Ash’s well-rounded retract~\cite{Ash}. On the latter space, $\op{PSL}_4(\Z)$ acts co-compactly, and a suitable form of Vorono\"i’s algorithm yields an explicit (finite) cell structure with cell stabilizers and computable quotient space, as described by Ellis, Dutour Sikiric and Sch\"urmann in their paper on the integral homology of ${\rm PSL}_4(\mathbb{Z})$ \cite{Dutour:Ellis:Schuermann}.

For the purpose of computing the cohomological $2$-torsion, we want to extract the $2$-torsion subcomplex from this Vorono\"i cell complex.
As mentioned at the beginning of Section~\ref{sec:morse}, we first need to make a subdivision to achieve a rigidity property and get the $2$-torsion subcomplex to be a cell complex -- this rigidity property is lacking on the original Vorono\"i cell complex of \cite{Dutour:Ellis:Schuermann}. We use “virtually simplicial subdivision”~\cite{psl4z} for this purpose. Its implementation~\cite{Bui} also provides code to extract the $2$-torsion subcomplex.

Next, we reduce the latter using the methods of Section~\ref{sec:morse}, which are also implemented in the same sub-package of HAP~\cite{HAP}. The resulting reduced $2$-torsion subcomplex is shown in Figure~\ref{reduced2torsionSubcomplex}.

\begin{figure}[htp]
 \caption{A reduced $2$-torsion subcomplex for PSL$_4(\Z)$. It is subject to the edge identifications $e_9 = e_9' = e_9'' = e_9'''$ and $e_8 = e_8'$. The cell stabilizers are provided in Table~\ref{stabilizers-and-their-cohomology-rings}.}
 \label{reduced2torsionSubcomplex}
\begin{tikzpicture}
    \node[shape=circle,draw=black] (v5second) at (0,-0.5) {$v_5''$};
    \node[shape=circle,draw=black] (v3) at (0,2.5) {$v_3$};
    \node[shape=circle,draw=black] (v5) at (2.5,4) {$v_5$};
    \node[shape=circle,draw=black] (v4) at (7.5,4) {$v_4$};
    \node[shape=circle,draw=black] (v1) at (2.5,1) {$v_1$};
    \node[shape=circle,draw=black] (v3prime) at (2.5,-2) {$v_3'$};
    \node[shape=circle,draw=black] (v2) at (5,2.5) {$v_2$} ;
    \node[shape=circle,draw=black] (v5prime) at (5,-0.5) {$v_5'$} ;

    \node (t1) at (4.0,1) {$t_1$};
    \node (t4) at (1.0,1) {$t_4$};
    \node (t2) at (3.5,2.5) {$t_2$};
    \node (t3) at (1.5,2.5) {$t_3$};
    \node (t5) at (3.5,-0.5) {$t_5$};
    \node (t6) at (1.5,-0.5) {$t_6$};
    \path [-](v2) edge node[left] {$e_4$} (v4);
    \path [-](v5second) edge node[left] {$e_9'$} (v3);
    \path [-](v3) edge node[left] {$e_9$} (v5);
    \path [-](v5second) edge node[left] {$e_5$} (v1);
    \path [-](v1) edge node[right] {$e_6$} (v5);
    \path [-](v5second) edge node[right] {$e_9'''$} (v3prime);
    \path [-](v1) edge node[left] {$e_3$} (v3prime);
    \path [-](v1) edge node[right] {$e_1$} (v2);
    \path [-](v5) edge node[right] {$e_8'$} (v2);
    \path [-](v3prime) edge node[right] {$e_9''$} (v5prime);
    \path [-](v2) edge node[right] {$e_8$} (v5prime);
    \path [-](v1) edge node[left] {$e_7$} (v5prime);
    \path [-](v1) edge node[right] {$e_2$} (v3);
\end{tikzpicture}
\end{figure}

\subsection{The cell stabilizers and their cohomology rings}

With a GAP script~\cite{supplementary-material}, we generate a SAGE~\cite{Sage} script  containing matrix representations of the cell stabilizer maps from that reduced $2$-torsion-subcomplex, with which the individual ring homomorphisms induced between the cohomology rings of cell stabilizers can be computed, using King's package~\cite{King} in SAGE version 9.6beta4.
Its output is provided in the supplementary materials compiled for the present paper~\cite{supplementary-material}, and yields the cohomology ring presentations in Table~\ref{stabilizers-and-their-cohomology-rings}.

\begin{table}
\caption{Cell stabilizers in the reduced $2$-torsion subcomplex for PSL$_4(\Z)$, and their cohomology rings.}
\label{stabilizers-and-their-cohomology-rings}
\begin{mdframed}
Denoting $\Gamma :=$ PSL$_4(\Z)$ and $\Gamma_\sigma := \{\gamma \in \Gamma : \gamma\cdot\sigma = \sigma\}$ the stabilizer of a cell $\sigma$, we observe the following stabilizers and their cohomology rings for the cells in Figure~\ref{reduced2torsionSubcomplex}.

\footnotesize
\underline{Vertex stabilizers:}

$\Gamma_{v_1} \cong G_1$ where $G_1$ is \verb!SmallGroup(288,1026)! in GAP's SmallGroups-library, given by the non-trivial group extension
$1 \to \Afour \times \Afour \to G_1 \to \Ctwo \to 1$,

$\Homol^*(\Gamma_{v_1};\thinspace \F_2) \cong \F_2[c_{2,0}, c_{2,1}, b_{6,5}, b_{1,0}, b_{3,1}, b_{3,3}]/\langle
b_{1,0}\cdot b_{3,1}, b_{1,0}\cdot b_{3,3},  b_{6,5}\cdot b_{1,0},  b_{6,5}\cdot  b_{3,1}\cdot  b_{3,3}+ b_{6,5}^2+c_{2,1}^3\cdot b_{3,1}^2+ c_{2,0}^3\cdot b_{3,3}^2
\rangle$.

$\Gamma_{v_2} \cong \Sfour$,
$\Homol^*(\Gamma_{v_2};\thinspace \F_2) \cong \F_2[\beta_1, \beta_2, \beta_3]/\langle
\beta_1\cdot \beta_3
\rangle$.

$\Gamma_{v_3} \cong \CtwoCube$,
$\Homol^*(\Gamma_{v_3};\thinspace \F_2) \cong \F_2[\lambda_1, \mu_1, \nu_1]$.

$\Gamma_{v_4} \cong \Sfour$,
$\Homol^*(\Gamma_{v_4};\thinspace \F_2) \cong \F_2[\gamma_1, \gamma_2, \gamma_3]/\langle
\gamma_1\cdot \gamma_3
\rangle$.

$\Gamma_{v_5} \cong G_2$ where $G_2$ is \verb!SmallGroup(96,227)! in GAP's SmallGroups-library, obtained from the non-trivial extensions $1 \to G_0 \to G_2 \to \Ctwo \to 1$
and $1 \to (\Z/2\Z)^4 \to G_0 \to \Z/3\Z \to 1$,

$\Homol^*(\Gamma_{v_5};\thinspace \F_2) \cong \F_2[\beta_{2,1}, \gamma_{2,0}, \gamma_{2,2}, \beta_{1,0}, \beta_{3,0}, \beta_{3,1}, \beta_{3,3}, \beta_{3,5}]/\langle
\beta_{2,1}\cdot \beta_{1,0},
 \beta_{1,0}\cdot \beta_{3,0},
 \beta_{1,0}\cdot \beta_{3,1},
 \beta_{1,0}\cdot \beta_{3,3},
 \beta_{1,0}\cdot \beta_{3,5},
 \beta_{2,1}\cdot \beta_{3,3}+\beta_{2,1}\cdot \beta_{3,1}+\gamma_{2,2}\cdot \beta_{3,3}+\gamma_{2,2}\cdot \beta_{3,1}+\gamma_{2,2}\cdot \beta_{3,0}+\gamma_{2,0}\cdot \beta_{3,3}+\gamma_{2,0}\cdot \beta_{3,0},
 \beta_{2,1}\cdot \beta_{3,5}+\beta_{2,1}\cdot \beta_{3,1}+\gamma_{2,2}\cdot \beta_{3,5}+\gamma_{2,2}\cdot \beta_{3,3}+\gamma_{2,2}\cdot \beta_{3,1}+\gamma_{2,0}\cdot \beta_{3,5}+\gamma_{2,0}\cdot \beta_{3,3}+\gamma_{2,0}\cdot \beta_{3,0},
 \beta_{3,1}\cdot \beta_{3,3}+\beta_{3,0}\cdot \beta_{3,1}+\beta_{3,0}^2+\beta_{2,1}^3+\beta_{2,1}^2\cdot \gamma_{2,2}+\beta_{2,1}^2\cdot \gamma_{2,0},
 \beta_{3,1}\cdot \beta_{3,5}+\beta_{3,0}\cdot \beta_{3,3}+\beta_{3,0}^2+\beta_{2,1}^3,
 \beta_{3,3}^2+\beta_{3,0}\cdot \beta_{3,5}+\beta_{3,0}^2+\beta_{2,1}^3+\beta_{2,1}^2\cdot \gamma_{2,2}
\rangle$.

\underline{Edge stabilizers:}

$\Gamma_{e_1} \cong \Cfour$,
$\Homol^*(\Gamma_{e_1};\thinspace \F_2) \cong \F_2[c_1, c_2]/\langle
c_1^2
\rangle$.

$\Gamma_{e_2} \cong \CtwoSq$,
$\Homol^*(\Gamma_{e_2};\thinspace \F_2) \cong \F_2[x_1, y_1]$.

$\Gamma_{e_3} \cong \CtwoSq$,
$\Homol^*(\Gamma_{e_3};\thinspace \F_2) \cong \F_2[b_1, v_1]$.

$\Gamma_{e_4} \cong \Deight$,
$\Homol^*(\Gamma_{e_4};\thinspace \F_2) \cong \F_2[r_1, t_1, t_2]/\langle
t_1^2+r_1\cdot t_1
\rangle$.

$\Gamma_{e_5} \cong \Ctwo$,
$\Homol^*(\Gamma_{e_5};\thinspace \F_2) \cong \F_2[\widetilde{b_1}]$.

$\Gamma_{e_6} \cong \Deight$,
$\Homol^*(\Gamma_{e_6};\thinspace \F_2) \cong \F_2[a_1, s_1, s_2]/\langle
a_1\cdot s_1
\rangle$.

$\Gamma_{e_7} \cong \Deight$,
$\Homol^*(\Gamma_{e_7};\thinspace \F_2) \cong \F_2[a_1',w_1,w_2]/\langle
a_1'\cdot w_1
\rangle$.

$\Gamma_{e_8} \cong \Ctwo$,
$\Homol^*(\Gamma_{e_8};\thinspace \F_2) \cong \F_2[\widetilde{a_1}]$.

$\Gamma_{e_9} \cong \CtwoCube$,
$\Homol^*(\Gamma_{e_9};\thinspace \F_2) \cong \F_2[x_1', y_1', z_1]$.

\underline{Triangle stabilizers:}

$\Gamma_{t_1} = \Gamma_{t_2} \cong \Ctwo$,
$\Homol^*(\Gamma_{t_1};\thinspace \F_2) \cong \F_2[\widetilde{a_1}]$.

$\Gamma_{t_3} \cong \CtwoSq$,
$\Homol^*(\Gamma_{t_3};\thinspace \F_2) \cong \F_2[\widetilde{x_1}, \widetilde{y_1}]$.

$\Gamma_{t_4} = \Gamma_{t_6} \cong \Ctwo$,
$\Homol^*(\Gamma_{t_4};\thinspace \F_2) \cong \F_2[\widetilde{b_1}]$.

$\Gamma_{t_5} \cong \CtwoSq$,
$\Homol^*(\Gamma_{t_3};\thinspace \F_2) \cong \F_2[\widetilde{u_1}, \widetilde{v_1}]$.
\normalsize
\end{mdframed}
\end{table}

\begin{remark}
Note that we merge the $2$-cells $t_1$ and $t_2$ along their common edge $e_8$ (these three cells have isomorphic stabilizers). But we may not merge
the $2$-cells $t_4$ and $t_6$ along their common edge $e_5$: Also these three cells have isomorphic stabilizers, but there are two copies of the edge $e_9$ adjacent, and two further copies of $e_9$ non-adjacent to $t_4$ and $t_6$. This forbidden merge would prevent the diagram in Figure~\ref{v3} from commuting. However, in order to simplify the diagram in Figure~\ref{v1}, we think of $t_4$ and $t_6$ being virtually merged just when chasing through that diagram.
\end{remark}

Then we assemble these cohomology ring maps, using the new HAP functions written for this purpose by Ellis, and available in the current version of HAP~\cite{HAP}, with which commutative diagrams of finite groups can be used to induce commutative diagrams on mod $\ell$ (especially at $\ell=2$) cohomology of those groups, keeping track of the cup product structure. Before the new functionality became available, keeping track of the cup product structure in HAP was possible only when the finite groups were ``prime power'' / ``$\ell$-groups'': all their elements had to be of order a power of the same prime. This restriction has been lifted with Ellis' new functions. We enter all those of the commutative diagrams of the $2$-torsion subcomplex of ${\rm PSL}_4(\mathbb{Z})$ which have to commute simultaneously. As this allows it to read out in HAP what happens in degrees $1$ to $4$ of the cup product structure~\cite{supplementary-material}, we are able to assemble the $d_1$ differential from our information about the individual cohomology ring maps obtained with King's package~\cite{King} - see our collection of induced maps (19 handwritten pages) on the generators of the stabilizer's cohomology rings, which make all the 16 diagrams (on 14 handwritten pages) commute~\cite{supplementary-material}.
We then use a Pari/GP~\cite{PariGP} script, developed with the help of Allombert, to check the equation $d_1^{1,*} \circ d_1^{0,*}\equiv 0 \mod 2$ which results from the commutativity of the diagrams. This Pari/GP script~\cite{supplementary-material} also outputs these diagrams (Figures~\ref{v1}, \ref{v3} and \ref{v5}) and the $d_1$ differential (Table~\ref{d1}).
The assembled $d_1$ differential (Table~\ref{d1}) is then manually coded into a SAGE script~\cite{supplementary-material}
which computes from it the $E_2$ page in a range of feasible degrees, as shown in Table~\ref{E2page}.

\section{The \texorpdfstring{$d_1$}{d1} differential for \texorpdfstring{${\rm PSL}_4(\mathbb{Z})$}{PSL4Z}} \label{sec:d1}

The bulk of the six years which this paper took to mature, went into establishing the $d_1$ differential (Table~\ref{d1}). King's SAGE package~\cite{King} allows to induce the individual cohomology ring maps from the cell stabilizer maps, but the relations between them are getting lost as the ring presentations are obtained by calls to the SINGULAR computer algebra system. Various ideas on how to ensure coherence on the choices of ring presentations were explored, until finally Ellis did extend the functionality of his GAP package HAP~\cite{HAP}, such that we can trace precisely what happens with the cohomology classes of degrees 1, 2 and 3 (see the 19 handwritten pages in \cite{supplementary-material}). This removes all ambiguities.

\begin{table}\caption{The $d_1$ differential on the generators of the cohomology rings of the cell stabilizers, referring to the ring presentations given in Table~\ref{stabilizers-and-their-cohomology-rings}.}
\label{d1}
\small
$$\begin{array}{|c|c|c|c|c|}\hline &&&&\\ d_1^{0,*} & v_1: c_{2,0}, c_{2,1}, b_{6,5}, b_{1,0}, b_{3,1}, b_{3,3} & v_2: \beta_1, \beta_2, \beta_3 & v_3: \lambda_1, \mu_1, \nu_1 & v_4: \gamma_1, \gamma_2, \gamma_3 \\ \hline
e_1: &
0, c_2, 0, c_1, 0, c_1\cdot c_2 &
c_1, c_2, c_1\cdot c_2  &
0, 0, 0
&
0, 0, 0
\\
e_2: &
x_1^2 + y_1\cdot x_1 + y_1^2, 0, 0, x_1 + y_1, 0, 0
&
0, 0, 0
&
x_1, x_1, y_1
&
0, 0, 0
\\
e_3: &
b_1^2 + v_1\cdot b_1 + v_1^2, v_1\cdot b_1 + v_1^2, 0, b_1, 0, 0
&
0, 0, 0
&
b_1, v_1, 0
&
0, 0, 0
\\
e_4: &
0, 0, 0, 0, 0, 0
&
t_1 + t_1, t_1^2 + t_2, t_2\cdot t_1  &
0, 0, 0
&
t_1, t_2, t_2\cdot t_1 + t_2\cdot t_1
\\
e_5: &
\widetilde{b_1}^2, 0, 0, \widetilde{b_1}, 0, 0
&
0, 0, 0
&
0, 0, 0
&
0, 0, 0
\\
e_6: &
s_1^2 + s_2, a_1^2, a_1^4\cdot s_2 , s_1, a_1\cdot s_2 , 0
&
0, 0, 0
&
0, 0, 0
&
0, 0, 0
\\
e_7: &
w_2, w_2 + a_1'^2 + w_1^2, a_1'^4\cdot w_2 , w_1, a_1'\cdot w_2 , a_1'\cdot w_2  &
0, 0, 0
&
0, 0, 0
&
0, 0, 0
\\
e_9: &
0, 0, 0, 0, 0, 0
&
0, 0, 0
&
y_1', z_1, z_1 + x_1'
&
0, 0, 0
\\
\hline \end{array}$$

$$\begin{array}{|c|c|}\hline &\\ d_1^{0,*} & v_5: \beta_{2,1}, \gamma_{2,0}, \gamma_{2,2}, \beta_{1,0}, \beta_{3,0}, \beta_{3,1}, \beta_{3,3}, \beta_{3,5} \\ \hline
e_1: &
0, 0, 0, 0, 0, 0, 0, 0
\\
e_2: &
0, 0, 0, 0, 0, 0, 0, 0
\\
e_3: &
0, 0, 0, 0, 0, 0, 0, 0
\\
e_4: &
0, 0, 0, 0, 0, 0, 0, 0
\\
e_5: &
\widetilde{b_1}^2, 0, 0, 0, \widetilde{b_1}^3, \widetilde{b_1}^3, 0, 0
\\
e_6: &
s_1^2, s_2, s_2 + a_1^2, a_1, s_1^3 + s_2\cdot s_1 , s_1^3 + s_2\cdot s_1 , 0, s_2\cdot s_1  \\
e_7: &
w_1^2, w_2 + w_1^2, a_1'^2, a_1', w_1\cdot w_2 , w_1^3, 0, w_1\cdot w_2 + w_1^3
\\
e_9: & \begin{pmatrix}
y_1'^2 + (z_1 + x_1')\cdot y_1' + x_1'\cdot z_1 + x_1'^2,\\ z_1\cdot y_1' + x_1'\cdot z_1,\\ z_1^2 + x_1'\cdot z_1 ,\\ 0,\\ y_1'^3 + x_1'\cdot y_1'^2 + z_1^2\cdot y_1' + x_1'^2\cdot z_1 + x_1'^3,\\ y_1'^3 + (z_1 + x_1')\cdot y_1'^2 + x_1'^2\cdot z_1 + x_1'^3,\\ 0,\\ z_1\cdot y_1'^2 + z_1^2\cdot y_1' + x_1'\cdot z_1^2 + x_1'^2\cdot z_1 \end{pmatrix}
\\
\hline \end{array}$$

$$\begin{array}{|l|c|c|c|c|c|c|c|l|}\hline &&&&&&&&\\ d_1^{1,*} & e_1: & e_2: & e_3: & e_4: & e_5: & e_6: & e_7: & e_9: \\
 &  c_{1}, c_{2} & x_1, y_1 & b_1, v_1 &  r_1, t_1, t_2 &  \widetilde{b_1} &  a_1, s_1, s_2 &  a_1', w_1, w_2 & x_1', y_1', z_1 \\ \hline
t_1 \cup t_2: &
0, 0
&
0, 0
&
0, 0
&
0, 0, 0
&
0
&
\widetilde{a_1}, 0, 0
&
\widetilde{a_1}, 0, 0
&
0, 0, 0
\\
t_3: &
0, 0
&
\widetilde{x_1}, \widetilde{y_1}
&
0, 0
&
0, 0, 0
&
0
&
0, \widetilde{x_1} + \widetilde{y_1}, \widetilde{y_1}\cdot \widetilde{x_1}
&
0, 0, 0
&
\widetilde{x_1} + \widetilde{y_1}, \widetilde{x_1}, \widetilde{x_1}
\\
t_4: &
0, 0
&
0, \widetilde{b_1}
&
0, 0
&
0, 0, 0
&
\widetilde{b_1}
&
0, 0, 0
&
0, 0, 0
&
\widetilde{b_1}, 0, 0
\\
t_5: &
0, 0
&
0, 0
&
\widetilde{u_1}, \widetilde{v_1}
&
0, 0, 0
&
0
&
0, 0, 0
&
0, \widetilde{u_1}, \widetilde{u_1}^2 + \widetilde{v_1}\cdot \widetilde{u_1} + \widetilde{v_1}^2
&
\widetilde{v_1}, \widetilde{u_1}, \widetilde{v_1}
\\
t_6: &
0, 0
&
0, 0
&
\widetilde{b_1}, 0
&
0, 0, 0
&
\widetilde{b_1}
&
0, 0, 0
&
0, 0, 0
&
0, \widetilde{b_1}, 0
\\
\hline \end{array}$$
\end{table}

\subsection{Sanity check: Commutative diagrams verified by the $d_1$ differential}

The vanishing of the concatenation $d_1^{1,q} \circ d_1^{0,q} \equiv 0 \mod 2$ is ensured by the commutative diagrams of ring generators of the cohomology rings of the finite cell stabilizer groups, displayed in Figures~\ref{v1}, \ref{v3} and ~\ref{v5}.

\begin{figure}[htp]
 \caption{Commutative diagram of mappings of the cohomology ring generators of the 1st vertex stabilizer.} \label{v1}
\[ \begin{tikzcd}  \begin{pmatrix}
0   \\
\widetilde{a_1}^2   \\
0   \\
0   \\
0   \\
0   \\
\end{pmatrix}   & \begin{pmatrix}
w_2   \\
w_2 + a_1'^2 + w_1^2   \\
a_1'^4\cdot w_2   \\
w_1   \\
a_1'\cdot w_2   \\
a_1'\cdot w_2   \\
\end{pmatrix} \arrow[l,"t_1\cup t_2"] \arrow[r,"t_5"] & \begin{pmatrix}
\widetilde{u_1}^2 + \widetilde{v_1}\cdot \widetilde{u_1} + \widetilde{v_1}^2   \\
\widetilde{v_1}\cdot \widetilde{u_1} + \widetilde{v_1}^2   \\
0   \\
\widetilde{u_1}   \\
0   \\
0   \\
\end{pmatrix} \\ \begin{pmatrix}
s_1^2 + s_2   \\
a_1^2   \\
a_1^4\cdot s_2   \\
s_1   \\
a_1\cdot s_2   \\
0   \\
\end{pmatrix} \arrow[d,swap,"t_3"] \arrow[u,"t_1\cup t_2"] & \begin{pmatrix}
c_{2,0}  \\ c_{2,1}   \\ b_{6,5}   \\ b_{1,0}   \\ b_{3,1}   \\ b_{3,3}
\end{pmatrix}  \arrow[u,swap,"e_7"] \arrow[l,swap,"e_6"] \arrow[r,"e_3"] \arrow[d,"e_2"] &
 \begin{pmatrix}
b_1^2 + v_1\cdot b_1 + v_1^2 \\
v_1\cdot b_1 + v_1^2 \\
0 \\
b_1 \\
0 \\
0 \\
\end{pmatrix}  \arrow[d,swap,"t_4\cup t_6"] \arrow[u,swap,"t_5"]  \\ \begin{pmatrix}
\widetilde{x_1}^2 + \widetilde{y_1}\cdot \widetilde{x_1} + \widetilde{y_1}^2 \\
0 \\
0 \\
\widetilde{x_1} + \widetilde{y_1} \\
0 \\
0 \\
\end{pmatrix}  & \begin{pmatrix}
x_1^2 + y_1\cdot x_1 + y_1^2 \\
0 \\
0 \\
x_1 + y_1 \\
0 \\
0 \\
\end{pmatrix} \arrow[l,"t_3"] \arrow[r,swap,"t_4 \cup t_6"] & \begin{pmatrix}
\widetilde{b_1}^2 \\
0 \\
0 \\
\widetilde{b_1} \\
0 \\
0 \\
\end{pmatrix} \end{tikzcd} \]
\end{figure}

\begin{figure}[htp]
 \caption{Commutative diagram of mappings of the cohomology ring generators of the 3rd vertex stabilizer.} \label{v3}
\[ \begin{tikzcd}  \begin{pmatrix}
\widetilde{u_1}   \\
\widetilde{v_1}   \\
0   \\
\end{pmatrix}   & \begin{pmatrix}
b_1   \\
v_1   \\
0   \\
\end{pmatrix} \arrow[l,swap,"t_5"] \arrow[r,"t_6"] & \begin{pmatrix}
\widetilde{b_1}   \\
0   \\
0   \\
\end{pmatrix} \\ \begin{pmatrix}
y_1'   \\
z_1   \\
z_1 + x_1'   \\
\end{pmatrix} \arrow[d,swap,"t_3"] \arrow[u,swap,"t_5"] & \begin{pmatrix}
\lambda_1   \\ \mu_1   \\ \nu_1  \\
\end{pmatrix} \arrow[l,swap,"e_9"] \arrow[u,"e_3"] \arrow[r,"e_9"] \arrow[d,"e_2"] & \begin{pmatrix}
y_1'   \\
z_1   \\
z_1 + x_1'   \\
\end{pmatrix} \arrow[u,"t_6"] \arrow[d,"t_4"] \\ \begin{pmatrix}
\widetilde{x_1}   \\
\widetilde{x_1}   \\
\widetilde{y_1}   \\
\end{pmatrix}  & \begin{pmatrix}
x1 \\
x1 \\
y1 \\
\end{pmatrix}  \arrow[l,"t_3"] \arrow[r,swap,"t_4"] & \begin{pmatrix}
0 \\
0 \\
\widetilde{b_1} \\
\end{pmatrix}   \end{tikzcd} \]
 \end{figure}

\begin{figure}[htp]
 \caption{Commutative diagram of mappings of the cohomology ring generators of the 5th vertex stabilizer.} \label{v5}
\begin{tikzcd}
& \begin{pmatrix} \widetilde{u_1}^2 \\
\widetilde{v_1}^2 +\widetilde{u_1}\cdot \widetilde{v_1} \\ 0\\ 0\\
\widetilde{u_1}^3+\widetilde{u_1}^2\cdot \widetilde{v_1} +\widetilde{u_1} \cdot\widetilde{v_1}^2 \\
\widetilde{u_1}^3 \\ 0 \\
\widetilde{u_1}^2\cdot \widetilde{v_1} +\widetilde{u_1}\cdot\widetilde{v_1}^2   \end{pmatrix}
 \\
\begin{pmatrix} x_1'^2+x_1'\cdot y_1'+x_1'\cdot z_1+y_1'^2+y_1'\cdot z_1\\
 x_1'\cdot z_1+y_1'\cdot z_1\\
 x_1'\cdot z_1+z_1^2\\
 0 \\
x_1'^3+x_1'^2\cdot z_1+y_1'^3+y_1'\cdot z_1^2+x_1'\cdot y_1'^2\\
x_1'\cdot y_1'^2+y_1'^3+y_1'^2\cdot z_1+x_1'^3+x_1'^2\cdot z_1\\
0\\
x_1'^2\cdot z_1+x_1'\cdot z_1^2+y_1'^2\cdot z_1+y_1'\cdot z_1^2
\end{pmatrix}
 \arrow[dd,swap,"t_3"] \arrow[ur,"t_5"] && \begin{pmatrix} w_1^2 \\ w_1^2 +w_2\\ a_1'^2 \\
a_1' \\ w_2\cdot w_1 \\ w_1^3 \\ 0 \\ w_1^3+w_2\cdot w_1                                \end{pmatrix}
\arrow[dd,swap,"t_1\cup t_2"] \arrow[ul,swap,"t_5"] \\
& \begin{pmatrix}\beta_{2,1}\\ \gamma_{2,0}\\\gamma_{2,2} \\ \beta_{1,0} \\ \beta_{3,0} \\ \beta_{3,1}  \\ \beta_{3,3} \\ \beta_{3,5}
  \end{pmatrix}
 \arrow[ul,swap,"e_9"] \arrow[ur,"e_7"] \arrow[dd,"e_6"] \\
\begin{pmatrix}\widetilde{x_1}^2+\widetilde{y_1}^2 \\ \widetilde{x_1}\cdot \widetilde{y_1}\\ \widetilde{x_1}\cdot \widetilde{y_1}\\ 0 \\ \widetilde{x_1}^3 +\widetilde{y_1}^3 \\ \widetilde{x_1}^3 +\widetilde{y_1}^3 \\ 0\\ \widetilde{x_1}^2\cdot \widetilde{y_1} +\widetilde{x_1}\cdot \widetilde{y_1}^2
\end{pmatrix}
  &&
\begin{pmatrix}  0\\ 0\\ \widetilde{a_1}^2 \\ \widetilde{a_1} \\ 0\\ 0\\ 0\\ 0
\end{pmatrix}
 \\
& \begin{pmatrix} s_1^2 \\ s_2 \\ s_2+a_1^2\\ a_1 \\ s_1^3+s_2\cdot s_1 \\ s_1^3+s_2\cdot s_1 \\ 0 \\ s_2\cdot s_1
  \end{pmatrix}
 \arrow[ul,"t_3"] \arrow[ur,swap,"t_1 \cup t_2"]
\end{tikzcd}
\end{figure}

\section{Spectral sequence evaluation and mod 2 Farrell--Tate cohomology of \texorpdfstring{${\rm PSL}_4(\mathbb{Z})$}{PSL4Z}}
\label{sec:farrelltate}

From the $E_2$-page shown in Table~\ref{E2page}, we see that the $d_2$ differential must be trivial, and hence the spectral sequence degenerates at that stage.

\begin{table}[htp]
\caption{{The $E_2$-page of our equivariant spectral sequence.}}
\label{E2page}
\normalsize
In all observed degrees $q\geq 2$, the differential $d_1^{0,q}$ is surjective,
and hence at $q\geq 2$, the $E_2$-page is concentrated in the single column $p = 0$.
In that column, we record the dimensions over $\F_2$ for $E_2^{0,q}$ for $7 \leq q \leq 42$ in Table~\ref{7-42}.
In all degrees $q \geq 0$, the $E_2$-page is concentrated in three columns,
\\ which we display here for $q \leq 6$: 
\\
$ \xymatrix{
q =  6  & & \F_2^{
27 }
 & & 0 & &
0
\\
q =  5  & & \F_2^{
17}
 & & 0 & &
0
\\
q =  4  & & \F_2^{
7 }
 & & 0 & &
0
\\
q =  3  & & \F_2^{
7 }
 & & 0 & &
0
\\
q =  2  & & \F_2^{
3 }
 & & 0 & &
0
\\
q =  1  & & 0
 & & \F_2 & &
0
\\
q=0 & & \F_2 & & 0 & & \F_2
\\
{} && p = 0 && p=1 && p=2
\\
}$
\normalsize
\end{table}

From the $E_2$-page calculated in Table~\ref{E2page}, we deduce our results on the mod-2 Farrell--Tate cohomology of {PSL}$_4(\Z)$, presented in Theorem~\ref{result} in the Introduction.

\section{A Hilbert--Poincar\'e series computation}
\label{sec:hpseries}

In this section, we compute the Hilbert--Poincar\'e series for the mod 2 cohomology of $\op{PSL}_4(\Z)$, both as a sanity check as well as a finitary encoding of all the mod 2 Betti numbers. 

We first recall the Hilbert--Poincar\'e series for the stabilizer groups of cells appearing in the reduced 2-torsion subcomplex depicted in Figure~\ref{reduced2torsionSubcomplex}, see Table~\ref{stabilizers-and-their-cohomology-rings}. The series are as follows, for the groups $G_1$ and $G_2$, the Hilbert--Poincar\'e series can be computed using SageMath or Macaulay2 from the algebra presentation in Table~\ref{stabilizers-and-their-cohomology-rings}: 
\begin{align*}
  {\rm HP}(G_1,T)&=\frac{1-2T^4+2T^{10}-T^{12}}{(1-T^6)(1-T^3)^2(1-T^2)^2(1-T)}\\
  {\rm HP}(G_2,T)&=\frac{1-T^3-4T^4-2T^5+3T^6+9T^7+3T^8-7T^9-6T^{10}+4T^{12}+T^{13}-T^{14}}{(1-T^3)^4(1-T^2)^3(1-T)}\\
  {\rm HP}(\Sfour,T)&=\frac{1-T^4}{(1-T^3)(1-T^2)(1-T)}\\
  {\rm HP}((\Ctwo)^n,T)&=\frac{1}{(1-T)^n}
\end{align*}
Note also that ${\rm HP}(\Deight,T)={\rm HP}((\Ctwo)^2,T)$ and ${\rm HP}(\Cfour,T)={\rm HP}(\Ctwo,T)$.

Denoting by $\mathcal{X}$ the reduced 2-torsion subcomplex in Figure~\ref{reduced2torsionSubcomplex}, we can now consider the following power series, whose coefficients are alternating sums of entries of the equivariant spectral sequence:
\begin{align*}
  \sum_q\left(\sum_{\sigma\in \mathcal{X}}(-1)^{\dim\sigma}\dim_{\F_2}\Homol^q(\Gamma_\sigma;\thinspace\F_2)\right)T^q
  &=\sum_q\left(\sum_p(-1)^p\dim_{\F_2}E^{p,q}_1\right)T^q\\
  &=\sum_q\left(\sum_p(-1)^p\dim_{\F_2}E^{p,q}_2\right)T^q.
\end{align*}
The first equality is by definition of the equivariant spectral sequence, and the second equality is the usual equality for Euler characteristics for the complexes formed by the $d_1$-differential on the $E_1$-page. From the cell structure of the reduced 2-torsion subcomplex, the above expression is given by Hilbert--Poincar\'e series of stabilizers as follows:
\begin{align*}
  &\sum_q\left(\sum_p(-1)^p\dim_{\F_2}E^{p,q}_2\right)T^q\\
  =&{\rm HP}(G_1,T)+{\rm HP}(G_2,T)+2{\rm HP}(\Sfour,T)+{\rm HP}(\Ctwo,T)^3\\
  &-\left(3{\rm HP}(\Ctwo,T)+5{\rm HP}(\Ctwo,T)^2+{\rm HP}(\Ctwo,T)^3\right)\\
  &+4{\rm HP}(\Ctwo,T)+2{\rm HP}(\Ctwo,T)^2\\
  =&{\rm HP}(G_1,T)+{\rm HP}(G_2,T)+2{\rm HP}(\Sfour,T)+{\rm HP}(\Ctwo,T)-3{\rm HP}(\Ctwo,T)^2
\end{align*}
In the second step, the contributions from vertices, edges and triangles are sorted into three different lines. Plugging in the information on Hilbert--Poincar\'e series for group cohomology of stabilizers gathered earlier, we can evaluate this expression; the reduced rational function is the following:
\begin{align*}
  \sum_q\left(\sum_p(-1)^p\dim_{\F_2}E^{p,q}_2\right)T^q&=-\frac{T^8+T^7+2T^6-2T^5-2T^4-3T^3-2T^2+3T-2}{(T^2+T+1)^2(T+1)(T-1)^4}\\
  &=2-T+3T^2+7T^3+7T^4+17T^5+27T^6+31T^7+\cdots
\end{align*}

We see that the computation agrees with the information on the $E_2$-page of the spectral sequence depicted in Table~\ref{E2page}. Since the spectral sequence is concentrated in the column $p=0$ for degrees $q\geq 3$, the Hilbert--Poincar\'e series above describes the group cohomology of $\op{PSL}_4(\Z)$ above the virtual cohomological dimension; the coefficients of the power series expansion agree with the dimensions listed in Table~\ref{7-42}. Adjusting for the low-degree entries outside the column $E^{0,q}_2$, we obtain the Hilbert--Poincar\'e series for the Farrell--Tate cohomology $\Farrell^*(\op{PSL}_4(\Z);\thinspace \F_2)$, whose initial terms agree with the information in Theorem~\ref{result}:
\begin{align*}
  \sum_q\left(\sum_p\dim_{\F_2}E^{p,q-p}_2\right)T^q&=\frac{2T^{11}-T^{10}-4T^9-3T^8+3T^7+5T^6-3T^4+T^3+4T^2-T+1}{(T^2+T+1)^2(T+1)(T-1)^4}\\
  &=1+5T^2+7T^3+7T^4+17T^5+27T^6+31T^7+\cdots
\end{align*}

\section{Implications on Steinberg homology} \label{sec:Steinberg}

In the long exact sequence connecting Farrell--Tate cohomology, group cohomology and Steinberg homology (cf. the chapter on Farrell--Tate cohomology in Brown's book~\cite{Brown}), we insert our results on Farrell--Tate cohomology $\Farrell^* := \Farrell^*(\op{PSL}_4(\Z);\thinspace \F_2)$, the results of Dutour, Ellis and Sch\"urmann~\cite{Dutour:Ellis:Schuermann} on group cohomology $\Homol^* := \Homol^*(\op{PSL}_4(\Z);\thinspace \F_2)$, and on Steinberg homology $\Steinberg_*$ the result $\Steinberg_0 = 0$ of Lee and Szczarba~\cite{Lee:Szczarba}*{theorems 1.3 and 4.1, lemma 4.2}.
Then we can speculate on the Steinberg homology in the other degrees (the numbers inserted here with a question mark assume the case of trivial connection maps in the long exact sequence, and the reader can easily adjust them to cases of non-trivial connection map ranks).
The top line of this long exact sequence demonstrates the compatibility of our results with $\Steinberg_0 = 0$ and the results of Dutour, Ellis and Sch\"urmann.

\begin{center}
\begin{tikzpicture}[descr/.style={fill=white,inner sep=1.5pt}]
        \matrix (m) [
            matrix of math nodes,
            row sep=1em,
            column sep=2.5em,
            text height=1.99ex, text depth=0.75ex
        ]
        {  & \Homol^5 (\dim 17) & \Farrell^5 (\dim 17) &\Steinberg_{0} = 0
         \\& \Homol^4 (\dim 6) & \Farrell^4 (\dim 7) &\Steinberg_{1}(\dim 1?) &  \\
           & \Homol^{3} (\dim 6) & \Farrell^{3} (\dim 7) & \Steinberg_2 (\dim 1?)&  \\
	   & \Homol^2 (\dim 3)& \Farrell^{2} (\dim 5) & \Steinberg_3 (\dim 2?)&\\
           & \Homol^1 (=0)& \Farrell^1 (= 0) &\Steinberg_{4} (=0?)&  \\
           & \Homol^{0} (\dim 1) & \Farrell^{0} (\dim 1) & \Steinberg_5 (= 0?)&  \\
	   & 0 & \Farrell^{-1} & \qquad \Steinberg_6  \qquad{}&\\
        };

        \path[overlay,->, font=\scriptsize,>=latex]
        (m-1-2) edge (m-1-3)
        (m-1-3) edge (m-1-4)
        (m-2-4) edge[out=-355,in=-175] node[descr,yshift=0.9ex] {$0$?} (m-1-2)
        (m-2-2) edge (m-2-3)
        (m-2-3) edge (m-2-4)
        (m-3-4) edge[out=-355,in=-175] node[descr,yshift=0.9ex] {$0$?} (m-2-2)
        (m-3-2) edge (m-3-3)
        (m-3-3) edge (m-3-4)
        (m-4-4) edge[out=-355,in=-175] node[descr,yshift=0.9ex] {$0$?} (m-3-2)
        (m-4-2) edge (m-4-3)
        (m-4-3) edge (m-4-4)
        (m-5-4) edge[out=-355,in=-175]  (m-4-2)
        (m-5-2) edge (m-5-3)
        (m-5-3) edge (m-5-4)
        (m-6-4) edge[out=-355,in=-175]  (m-5-2)
        (m-6-2) edge (m-6-3)
        (m-6-3) edge (m-6-4)
        (m-7-4) edge[out=-355,in=-175]  (m-6-2)
        (m-7-2) edge (m-7-3)
        (m-7-3) edge (m-7-4)
;
\end{tikzpicture}
\end{center}

\begin{bibdiv}
 \begin{biblist}
%
% \bib{Adem:Milgram}{book}{
%   AUTHOR = {Adem, A.},
%   AUTHOR = {Milgram, R.J.},
%   TITLE = {Cohomology of finite groups},
%   SERIES = {Grundlehren der mathematischen Wissenschaften},
%   VOLUME = {309},
%   PUBLISHER = {Springer},
%   YEAR = {2004}}

\bib{Ash}{article}{
   author={Ash, Avner},
   title={Small-dimensional classifying spaces for arithmetic subgroups of
   general linear groups},
   journal={Duke Math. J.},
   volume={51},
   date={1984},
   number={2},
   pages={459--468},
   issn={0012-7094},
   review={\MR{0747876}},
   doi={10.1215/S0012-7094-84-05123-8},
}
\bib{Ash:Gunnells:McConnell}{article}{
   author={Ash, Avner},
   author={Gunnells, Paul E.},
   author={McConnell, Mark},
   title={Cohomology of congruence subgroups of ${\rm SL}_4(\mathbb Z)$},
   journal={J. Number Theory},
   volume={94},
   date={2002},
   number={1},
   pages={181--212},
   issn={0022-314X},
   review={\MR{1904968}},
   doi={10.1006/jnth.2001.2730},
}
\bib{Borel:Serre}{article}{
   author={Borel, A.},
   author={Serre, J.-P.},
   title={Corners and arithmetic groups},
   journal={Comment. Math. Helv.},
   volume={48},
   date={1973},
   pages={436--491},
   issn={0010-2571},
   review={\MR{0387495}},
   doi={10.1007/BF02566134},
}
\bib{Bridson:Haefliger}{book}{
   author={Bridson, Martin R.},
   author={Haefliger, Andr\'e},
   title={Metric spaces of non-positive curvature},
   series={Grundlehren der mathematischen Wissenschaften [Fundamental
   Principles of Mathematical Sciences]},
   volume={319},
   publisher={Springer-Verlag, Berlin},
   date={1999},
   pages={xxii+643},
   isbn={3-540-64324-9},
   review={\MR{1744486}},
   doi={10.1007/978-3-662-12494-9},
}

\bib{Brown}{book}{
   author={Brown, Kenneth S.},
   title={Cohomology of groups},
   series={Graduate Texts in Mathematics},
   volume={87},
   note={Corrected reprint of the 1982 original},
   publisher={Springer-Verlag, New York},
   date={1994},
   pages={x+306},
   isbn={0-387-90688-6},
   review={\MR{1324339}},
}

\bib{Bui}{book}{
  author =  {Bui, Anh Tuan},
    author =  {Rahm, Alexander D.},
    title =   {Torsion Subcomplexes Subpackage, version 2.1},
  address = {accepted sub-package in HAP (Homological Algebra Programming) in the computer algebra system GAP},
  year =    {2018, Source code available at
  \url{https://gaati.org/rahm/subpackage-documentation/} \qquad ${}$},
}

\bib{psl4z}{article}{
  author =  {Bui, Anh Tuan},
  author =  {Rahm, Alexander D.},
  author = {Wendt, Matthias},
  title =   {The Farrell--Tate and Bredon homology for $\op{PSL}_4(\mathbb{Z})$ via cell subdivisions},
  journal = {J. Pure Appl. Algebra},
  volume = {223},
   date={2019},
   number={7},
   pages={2872--2888},
   issn={0022-4049},
   review={\MR{3912952}},
   doi={10.1016/j.jpaa.2018.10.002},
}

\bib{supplementary-material}{book}{
  author =  {Bui, Anh Tuan},
  author =  {Rahm, Alexander D.},
  author = {Wendt, Matthias},
  title =   {Supplementary material for the present (2026) paper},
  address = {\url{https://github.com/arahm/PSL4Z}},
}

\bib{Dutour:Ellis:Schuermann}{article}{
   author = { {Dutour~Sikiri\'c}, Mathieu },
  author =  {Ellis, Graham J.},
   author = { {Schuermann}, Achill },
   Title = {{On the integral homology of $\text{PSL}_4(\mathbb Z)$ and other arithmetic groups}},
    Journal = {{J. Number Theory}},
    Volume = {131},
    Number = {12},
    Pages = {2368--2375},
    Year = {2011},
   review={~Zbl 1255.11028}
}

\bib{Elbaz-Vincent:Gangl:Soule}{article}{
   author={Elbaz-Vincent, Philippe},
   author={Gangl, Herbert},
   author={Soul\'e, Christophe},
   title={Quelques calculs de la cohomologie de ${\rm GL}_N(\mathbb Z)$ et de
   la $K$-th\'eorie de $\mathbb Z$},
   language={French, with English and French summaries},
   journal={C. R. Math. Acad. Sci. Paris},
   volume={335},
   date={2002},
   number={4},
   pages={321--324},
   issn={1631-073X},
   review={\MR{1931508}},
   doi={10.1016/S1631-073X(02)02481-0},
}

\bib{Elbaz-Vincent:Gangl:Soule2}{article}{
   author={Elbaz-Vincent, Philippe},
   author={Gangl, Herbert},
   author={Soul\'e, Christophe},
   title={Perfect forms, K-theory and the cohomology of modular groups},
   journal={Adv. Math.},
   volume={245},
   date={2013},
   pages={587--624},
   issn={0001-8708},
   review={\MR{3084439}},
   doi={10.1016/j.aim.2013.06.014},
}

\bib{HAP}{misc}{
      author={Ellis, Graham},
  title =            {{HAP},  {Homological  Algebra  Programming}, {V}ersion                       1.66},
  year =             {2024},
  note =             {GAP package, \href{https://gap-packages.github.io/hap}{\texttt{https://gap\texttt{\symbol{45}}packages.github.io/}\discretionary                       {}{}{}\texttt{hap}}},
}

\bib{Ellis:Hegarty}{article}{
    Author = {Ellis, Graham},
  author={Hegarty, Fintan},
    Title = {{Computational homotopy of finite regular CW-spaces.}},
%     FJournal = {{Journal of Homotopy and Related Structures}},
    Journal = {{J. Homotopy Relat. Struct.}},
%     ISSN = {2193-8407; 1512-2891/e},
    Volume = {9},
    Number = {1},
    Pages = {25--54},
    Year = {2014},
    Publisher = {Springer, Berlin/Heidelberg; Georgian Academy of Sciences, A. Razmadze Mathematical Institute, Tbilisi},
%     Language = {English},
%     DOI = {10.1007/s40062-013-0029-4},
%     MSC2010 = {55N99 55-04},
   review={~Zbl 1311.55008}
}

\bib{Forman}{article}{
   author={Forman, Robin},
   title={Morse theory for cell complexes},
   journal={Adv. Math.},
   volume={134},
   date={1998},
   number={1},
   pages={90--145},
   issn={0001-8708},
   review={\MR{1612391}},
   doi={10.1006/aima.1997.1650},
}

\bib{Freij}{article}{
   author={Freij, Ragnar},
   title={Equivariant discrete Morse theory},
   journal={Discrete Math.},
   volume={309},
   date={2009},
   number={12},
   pages={3821--3829},
   issn={0012-365X},
   review={\MR{2537376}},
   doi={10.1016/j.disc.2008.10.029},
}

\bib{GAP}{book}{
  author={The GAP~Group},
  title={GAP -- Groups, Algorithms, and Programming, Version 4.9.3},
  year         = {2018},
  address   = {\\ \url{https://www.gap-system.org}},
}

\bib{Green:King}{article}{
   author={Green, David J.},
   author={King, Simon A.},
   title={The computation of the cohomology rings of all groups of order
   128},
   journal={J. Algebra},
   volume={325},
   date={2011},
   pages={352--363},
   issn={0021-8693},
   review={\MR{2745544}},
   doi={10.1016/j.jalgebra.2010.08.016},
}

\bib{Hatcher}{book}{
   author={Hatcher, Allen},
   title={Algebraic topology},
   publisher={Cambridge University Press, Cambridge},
   date={2002},
   pages={xii+544},
   isbn={0-521-79160-X},
   isbn={0-521-79540-0},
   review={\MR{1867354}},
}

\bib{King}{book}{
  author =  {King, Simon},
  title =   {Mod-$p$ Group Cohomology Package},
  address = {package for the SAGE computer algebra system, v3.3.2 (October 2020): \url{https://users.fmi.uni-jena.de/cohomology/documentation/} , described in \cite{Green:King}},
}

\bib{Lee:Szczarba}{article}{
   author={Lee, Ronnie},
   author={Szczarba, R. H.},
   title={On the homology and cohomology of congruence subgroups},
   journal={Invent. Math.},
   volume={33},
   date={1976},
   number={1},
   pages={15--53},
   issn={0020-9910},
   review={\MR{0422498}},
   doi={10.1007/BF01425503},
}

\bib{LeeSzczarbaTorsion}{article}{
   author={Lee, Ronnie},
   author={Szczarba, R. H.},
   title={On the torsion in $K\sb{4}({\bf Z})$ and $K\sb{5}({\bf Z})$},
   journal={Duke Math. J.},
   volume={45},
   date={1978},
   number={1},
   pages={101--129},
   issn={0012-7094},
   review={\MR{0491893}},
}
\bib{PariGP}{book}{
      author = {The PARI~Group},
      title        = {PARI/GP version \texttt{2.9.5}},
      year         = {2018},
      address      = {Univ. Bordeaux, available from \url{http://pari.math.u-bordeaux.fr/}}
}

\bib{Tbilisi}{article}{
  TITLE = {{An introduction to torsion subcomplex reduction}},
  AUTHOR = {Rahm, Alexander D.},
  URL = {https://hal.science/hal-03112213},
  JOURNAL = {{Advanced Studies: Euro-Tbilisi Mathematical Journal}},
  PUBLISHER = {{Tbilisi Centre for Mathematical Sciences}},
  VOLUME = {9},
  PAGES = {pages 105-126, Special issue on Cohomology, Geometry, Explicit Number Theory},
  YEAR = {2021},
}

\bib{accessingFarrell}{article}{
   author={Rahm, Alexander D.},
   title={Accessing the cohomology of discrete groups above their virtual cohomological dimension},
   journal={J. Algebra},
   volume={404},
   date={2014},
   pages={152--175},
   issn={0021-8693},
   review={\MR{3177890}},
}

\bib{TransAMS}{article}{
   author={Rahm, Alexander D.},
   title={The homological torsion of $\rm{PSL}_2$ of the imaginary quadratic integers},
   journal={Trans. Amer. Math. Soc.},
   volume={365},
   date={2013},
   number={3},
   pages={1603--1635},
   issn={0002-9947},
   review={\MR{3003276}},
   doi={10.1090/S0002-9947-2012-05690-X},
}

\bib{Sage}{manual}{
      author={Developers, The~Sage},
       title={{S}agemath, the {S}age {M}athematics {S}oftware {S}ystem
  ({V}ersion 9.6beta4)},
        date={2022},
        note={\\doi.org/10.5281/zenodo.820864 \qquad {\tt https://www.sagemath.org}},
}
\bib{Soule}{article}{
   author={Soul{\'e}, Christophe},
   title={The cohomology of ${\rm SL}_{3}({\bf Z})$},
   journal={Topology},
   volume={17},
   date={1978},
   number={1},
   pages={1--22},
   issn={0040-9383},
}
\bib{Stogrin}{article}{
   author={\v Stogrin, M. I.},
   title={Locally quasidensest lattice packings of spheres},
   language={Russian},
   journal={Dokl. Akad. Nauk SSSR},
   volume={218},
   date={1974},
   pages={62--65},
   issn={0002-3264},
   review={\MR{0360476}},
}
\bib{Yerolemou:Nanda}{article}{
  AUTHOR = {Yerolemou, Naya},
  AUTHOR = {Nanda, Vidit},
     TITLE = {Morse theory for complexes of groups},
   JOURNAL = {J. Pure Appl. Algebra},
    VOLUME = {228},
      YEAR = {2024},
    NUMBER = {6},
     PAGES = {Paper No. 107606, 31},
       DOI = {10.1016/j.jpaa.2024.107606},
       URL = {https://doi.org/10.1016/j.jpaa.2024.107606},
}

\end{biblist}
\end{bibdiv}

\end{document}